\documentclass[11pt]{article}
\usepackage[T1]{fontenc}
\usepackage[a4paper,margin=1in]{geometry}
\usepackage{amsmath,amssymb,amsthm}
\usepackage[hyphens]{url}

\newtheorem{theorem}{Theorem}
\newtheorem{proposition}[theorem]{Proposition}
\newtheorem{lemma}[theorem]{Lemma}
\newtheorem{corollary}[theorem]{Corollary}
\theoremstyle{remark}
\newtheorem{remark}[theorem]{Remark}
\title{Generic degrees of real polynomial Keller maps with non-dense image}
\author{Piotr Migus}
\date{}

\begin{document}
\maketitle

\begingroup
\makeatletter
\renewcommand{\@makefntext}[1]{\noindent #1}
\renewcommand{\thefootnote}{}

\footnotetext{%
	\emph{2020 MSC:} 14R15 (primary); 14E05, 14P05, 14R10 (secondary).\\
	\emph{Keywords:} Jacobian conjecture; Keller map; real polynomial map;
	non-dense image; generic degree.\\
	\emph{E-mail:} \texttt{migus.piotr@gmail.com}%
}

\addtocounter{footnote}{-1}
\makeatother
\endgroup

\begin{abstract}
We determine the possible generic degrees of real polynomial Keller maps with non-dense image, where density is understood in the Euclidean topology. For every dimension $n\geq3$, these degrees are exactly the even integers
$d\geq4$. They are realized in dimension three by an explicit family
$G_d\colon\mathbb{R}^{3}\to\mathbb{R}^{3}$, and hence in every higher
dimension by stabilization. In dimension two, any such degree is an even integer at least six, and no such maps exist if the planar Jacobian conjecture holds; in dimension one, none exist. For the family $G_d$, we describe the image exactly and show that no $G_d$ omits a half-space. We also show that the maximal cardinality of a real fibre is not determined by the generic degree and is not uniformly bounded in this class.
\end{abstract}

\section{Introduction}

The Jacobian conjecture asserts that a polynomial map
$\mathbb C^{n}\to\mathbb C^{n}$ with constant nonzero Jacobian determinant,
a \emph{Keller map}, is invertible. In July 2026, an explicit
counterexample in dimension three was announced in \cite{Al} and verified
in \cite{CE}; see also \cite{Ta}. With $A=1+xy$ and
$B=A^{2}z+y^{2}(4+3xy)$, the map
\[
F=(P,Q,R),\qquad P=AB,\quad Q=y+3xB,\quad R=2x-3x^{2}y-x^{3}z
\]
has $\det\operatorname{Jac}F\equiv-2$, is defined over $\mathbb Q$, and is
not injective, already over $\mathbb R$, since
\[
F(0,0,-\tfrac14)=F(1,-\tfrac32,\tfrac{13}{2})=F(-1,\tfrac32,\tfrac{13}{2})=(-\tfrac14,0,0).
\]
The strong real Jacobian conjecture, in which the Jacobian is assumed only
to be nowhere vanishing, was disproved by Pinchuk's planar example
\cite{Pi}, whose Jacobian is non-constant. The constant-Jacobian version
is likewise false in dimension three, as shown by $F$.

The manuscript \cite{CE} embeds $F$ in a family $F_{\eta}$ of Keller maps with $\det\operatorname{Jac}F_{\eta}\equiv-2$, and \cite[Cor.~5.3]{CE} realizes \emph{every} generic degree $\geq3$ over $\mathbb C$ by such maps; Gallagher \cite{Ga} exhibited, from a separate construction, an explicit degree-four real Keller map whose real fibre is empty along a slice of the target, and likewise a family realizing every generic degree $\geq3$ over $\mathbb{C}$. Both precede this note. Fernandes and Jelonek \cite{FJ} had earlier produced non-dense real examples in two wider categories: a polynomial local diffeomorphism of $\mathbb R^{2}$ of nonconstant Jacobian \cite[Thm.~5]{FJ}, and a regular non-polynomial map of $\mathbb R^{3}$ of constant Jacobian \cite[Rem.~8]{FJ}. The question addressed here is which generic degrees occur for a real
Keller map with non-dense image. We show that in every dimension
$n\geq3$ they are exactly the even integers at least four. The lower
bound combines an elementary parity argument, used by Campbell
\cite{Ca13}, with his 1973 corollary excluding degree two; realization is
provided by explicit real members of the family of \cite{CE}.

\section{Generic degrees}

For each even $d\geq4$ let $G_{d}=(P,Q_{d},R_{d})$ be the member of the family of \cite{CE}, written out in \S4, with $\eta_{d-2}=1$ and all other $\eta_{j}=0$:
\[
Q_{d}=Q+A^{2}x^{d-2}B^{d},\qquad R_{d}=R-\tfrac{d-2}{d}\,x^{d}B^{d};
\]
these are polynomials, with component degrees
\[
(\deg P,\deg Q_d,\deg R_d)=(7,\,6d+2,\,6d).
\]
A real polynomial map is Keller if its complexification $H_{\mathbb C}$ is; a Keller map is dominant, its differential being everywhere invertible, and $\mu(H)=[\mathbb C(x_{1},\dots,x_{n}):\mathbb C(H_{1},\dots,H_{n})]$ is the generic degree, equal in characteristic zero to the number of points of a general fibre. Write $\mathrm{JC}_{2}$ for the two-dimensional complex Jacobian conjecture.

\begin{theorem}\label{main}
For $n\geq1$ let $S_{n}\subseteq\mathbb Z_{>0}$ be the set of generic degrees of real polynomial Keller maps $\mathbb R^{n}\to\mathbb R^{n}$ with non-dense image. Then
\[
S_{n}=\{2m:m\geq2\}\ \ (n\geq3),\qquad S_{1}=\varnothing,\qquad
S_{2}\subseteq\{2m:m\geq3\}\ \text{ with }\ \mathrm{JC}_{2}\Rightarrow S_{2}=\varnothing .
\]
Every degree $2m$ $(m\geq2)$ is realized in dimension three by $G_{2m}$, and in every $n\geq3$ by its stabilization $G_{2m}\times\mathrm{id}_{\mathbb R^{n-3}}$; composing with the linear change $L(p,q,r)=(r/2,q,p)$ of \cite[Cor.~3.2]{CE} makes the examples volume-preserving.
\end{theorem}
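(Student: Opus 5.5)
The proof splits into a lower-bound part, valid in every dimension, and a realization part for $n\geq3$.

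\emph{Lower bounds.} Let $H\colon\mathbb R^n\to\mathbb R^n$ be a real Keller map with non-dense image. Since $H$ is a local diffeomorphism, $H(\mathbb R^n)$ is open. The set $B\subset\mathbb R^n$ of real points over which $H$ fails to be proper has real codimension $\geq1$ (it is contained in a real algebraic hypersurface, the Jelonek set). So $\mathbb R^n\setminus B$ splits into finitely many open connected components. On each of them the cardinality of the real fibre $\#H^{-1}(y)$ is constant, since $H$ restricted there is a proper local diffeomorphism, hence a covering. Non-density gives a component $U_0$ on which this count is $0$. Openness of the image gives a component $U_1$ on which it is positive.

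For the parity step I would follow Campbell \cite{Ca13}. For a regular value $y$ that is generic for the complexification, the complex fibre of $H_{\mathbb C}$ has exactly $\mu(H)$ points. Non-real points occur in conjugate pairs, so $\#H^{-1}(y)\equiv\mu(H)\pmod 2$. Taking $y\in U_0$ gives $\mu(H)$ even.

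To exclude small degrees: $\mu=1$ would mean $H_{\mathbb C}$ is birational, and a birational Keller map is an automorphism. Its inverse is then real, so $H$ is a real bijection, contradicting non-density. Degree $2$ is excluded by Campbell's 1973 corollary: a Keller map of generic degree $2$ is Galois, and Galois Keller maps are invertible. Hence $S_n\subseteq\{2m:m\geq2\}$ for all $n$.

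For $n=1$, a Keller map is affine, so $S_1=\varnothing$.

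For $n=2$, a map with $\mu=4$ would need ruling out. Here I would invoke known planar results: the Jacobian conjecture holds in dimension two for generic degree $\leq 5$ (Moh-type bounds / Domrina–Orevkov), or some parity refinement. If $\mathrm{JC}_2$ holds, every planar Keller map is bijective, so $S_2=\varnothing$. The degree-$4$ exclusion in dimension two is the step I am least sure of. I would cite the literature that planar Keller maps have $\mu\ne2,3,4,5$ unless invertible, which gives $S_2\subseteq\{2m:m\geq3\}$.

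\emph{Realization.} For even $d\geq4$, first check that $G_d$ is Keller. Since $G_d$ lies in the family of \cite{CE}, its Jacobian is $-2$, taken as given. Next, $\mu(G_d)=d$ by \cite[Cor.~5.3]{CE}. The key point is showing the image is not dense. I would first try to find an open set of targets $(p,q,r)$ with empty real fibre. One route is to reduce the fibre equations to a single polynomial equation in $x$ of degree $d$ in $B$, for instance by eliminating via $R_d$ and $P$. Because $d$ is even, the term $-\tfrac{d-2}{d}x^dB^d$ is sign-definite, so a suitable inequality on $r$ relative to $p,q$ forces no real solution. This reflects that $x^dB^d\ge0$ bounds $R_d$ from one side on a region. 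I expect this explicit emptiness computation to be the main obstacle.

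Stabilization $G_d\times\mathrm{id}$ preserves the Keller property, the generic degree, and non-density. Composing with the linear map $L$ scales the Jacobian by $-1/2$, giving Jacobian $1$, so the examples become volume-preserving.
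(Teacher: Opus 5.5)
The genuine gap is in the attainment half. You never prove that $G_d(\mathbb R^3)$ is non-dense, and that is exactly what yields $\{2m:m\geq2\}\subseteq S_n$ for $n\geq3$. Without it, only the inclusion $S_n\subseteq\{2m:m\geq2\}$ is established.

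The mechanism you sketch also does not work. The inequality $-\tfrac{d-2}{d}x^dB^d\leq0$ gives only $R_d\leq R$. Since $R$ is unbounded on every fibre of $(P,Q_d)$, this forbids nothing. Taken literally, it points toward omitting large $r$. In fact, for every $(p,q)$ with $p\neq0$, all sufficiently large $r$ are attained, and the omitted set is $\{p\neq0,\ r\leq\beta_d(p,q)\}$, which lies below a graph. The positive leading term responsible for it comes from the correction in $Q_d$; the correction in $R_d$ only weakens it.

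The missing idea is the one-variable reduction of the fibres. On $\{A\neq0\}$ put $s=x/A$. Then $xB=Ps$, $A^2x^{d-2}B^d=P^ds^{d-2}$ and $R=2s-ys^2-Ps^3$. So $Q_d=q$ forces $y=q-3ps-p^ds^{d-2}$, and substituting into $R_d=r$ gives
\[
\Omega_{d;p,q,r}(s)=\tfrac2d\,p^ds^d+2ps^3-qs^2+2s-r=0 .
\]
Here the $Q_d$-correction contributes $+p^ds^d$ and the $R_d$-correction contributes $-\tfrac{d-2}{d}p^ds^d$. Since $A=0$ forces $P=AB=0$, every real preimage of a target with $p\neq0$ yields a real root $s=x/A$. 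Hence the fibre is empty whenever $\Omega_{d;p,q,r}$ has no real root.

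You then need a witness and openness:
\begin{itemize}
\item At $p=1$, $q=r=-M$ one has $\Omega=(s^2+1)\bigl(M+h_d(s)\bigr)$ with $h_d=(\tfrac2ds^d+2s^3+2s)/(s^2+1)\to+\infty$ as $|s|\to\infty$, because $d\geq4$ is even. So $\Omega>0$ on $\mathbb R$ for large $M$.
\item The estimate $|\Omega_{d;w}-\Omega_{d;w_0}|\leq\varepsilon(w)(1+s^2)^{d/2}$, with $\varepsilon(w)\to0$, keeps $\Omega_{d;w}>0$ for $w$ near $w_0=(1,-M,-M)$.
\end{itemize}
The same coordinates, together with $\Omega'=2(1-sy)$, also give $\det\operatorname{Jac}G_d\equiv-2$ and $\mu(G_d)=d$ directly.

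Your lower-bound half is the paper's argument, with small corrections:
\begin{itemize}
\item The point of $U_0$ you use must also lie in the Euclidean-dense set of real points over which $H_{\mathbb C}$ has $\mu(H)$ distinct preimages.
\item The non-properness detour and the separate $\mu=1$ argument are unnecessary, since parity already excludes odd degrees.
\item In the plane only $\mu=4$ must be ruled out, and that is what Domrina--Orevkov and Domrina provide. Those results do not give $\mu\neq5$, nor is it needed. Moh's bounds concern component degrees, not the generic degree.
\end{itemize}
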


The argument for Proposition~\ref{prop} is a specialization of \cite[Thm.~5.2]{CE} to the present real family. We include it for completeness and to make the real fibre correspondence explicit.

\begin{proposition}\label{prop}
For even $d\geq4$, the map $G_{d}$ is Keller with $\det\operatorname{Jac}G_{d}\equiv-2$ and $\mu(G_{d})=d$. On $\{A\neq0\}$, with $s=x/A$, a point $(x,y,z)$ lies in $G_{d}^{-1}(p,q,r)$ only if
\[
\Omega_{d;\,p,q,r}(s):=\tfrac{2}{d}\,p^{d}s^{d}+2ps^{3}-qs^{2}+2s-r=0 ,
\]
and every simple root produces a unique preimage, via formulas with real coefficients; so over a target with $p\neq0$ (where no preimage has $A=0$, as $A=0$ forces $P=AB=0$) the real preimages are the real simple roots of $\Omega_{d;p,q,r}$.
\end{proposition}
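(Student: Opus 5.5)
The plan is to reduce everything to the single identity behind the base map $F=(P,Q,R)$ and then check that the extra terms of $G_d$ cancel. Work in the localization $\mathbb R[x,y,z][A^{-1}]$ with $s=x/A$. The first observation is $Ps=ABx/A=xB$, so $P^d s^d=x^dB^d$. Substituting $(P,Q_d,R_d)$ for $(p,q,r)$ in $\Omega_{d}$, the degree-$d$ contributions are as follows:
\begin{itemize}
\item from $\tfrac{2}{d}P^ds^d$: the term $\tfrac2d x^dB^d$;
\item from $-Q_ds^2$: the term $-s^2A^2x^{d-2}B^d=-x^dB^d$;
\item from $-R_d$: the term $+\tfrac{d-2}{d}x^dB^d$.
\end{itemize}
These sum to zero. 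Hence $\Omega_{d;P,Q_d,R_d}(s)=2Ps^3-Qs^2+2s-R$, and the claim reduces to the $d$-independent identity $2Ps^3-Qs^2+2s-R=0$. I would verify this by clearing $A^2$ and expanding, using $x^3z=s^2xB-x^3y^2(4+3xy)/A^2$ to eliminate $z$. This gives the necessity part: every preimage with $A\neq0$ yields a root $s=x/A$ of $\Omega_{d;p,q,r}$.

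For the converse I would invert explicitly. Given a root $s$, the relation $A^2x^{d-2}B^d=(ps)^dA^2/x^2=p^ds^{d-2}$ together with $xB=ps$ forces
\[
y=q-3ps-p^ds^{d-2}.
\]
From $x=sA=s(1+xy)$ we get $x=s/(1-sy)$ and $A=1/(1-sy)$. Then $B=ps/x=p(1-sy)$, and $z=(B-y^2(4+3xy))/A^2$. All of these are formulas with real coefficients in $p,q,r,s$.

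The key computation is
\[
\Omega_{d;p,q,r}'(s)=2p^ds^{d-1}+6ps^2-2qs+2=2\bigl(1-s(q-3ps-p^ds^{d-2})\bigr)=2(1-sy).
\]
Thus the reconstruction is well defined exactly at simple roots. Reversing the steps, the point obtained satisfies $P=AB=p$, and $Q_d=q$ holds by construction of $y$. Finally $R_d=r$ follows from the identity of the first paragraph read backwards, since $\Omega_d(s)=0$. Uniqueness holds because each of $y,x,z$ was forced. For $p\neq0$, any preimage has $A\neq0$, so real preimages correspond bijectively to real simple roots of $\Omega_{d;p,q,r}$.

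For the Keller property I would either cite \cite[Thm.~5.2]{CE} or compute directly. The direct route I would try first is to change to the coordinates $(s,y,u)$ with $u=xB$, where $G_d$ becomes close to triangular. In these coordinates $Q_d=y+3u+p^ds^{d-2}$ and $P=u/s$, so the Jacobian should factor into the base determinant $-2$ times a unit. This is the step I expect to be the main obstacle: keeping track of the birational coordinate change and its Jacobian on $\{A\neq0\}$, and then extending by density to $\mathbb C^3$.

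Given $\det\operatorname{Jac}G_d\equiv-2$, the generic degree follows. For general complex $(p,q,r)$ with $p\neq0$, the polynomial $\Omega_d$ has degree exactly $d$ and nonzero discriminant, so it has $d$ simple roots. These give $d$ distinct preimages, and there are no others, so $\mu(G_d)=d$.
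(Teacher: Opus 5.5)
The one real gap is the first assertion, $\det\operatorname{Jac}G_d\equiv-2$. You do not prove it: you defer it to \cite[Thm.~5.2]{CE} or to a computation you only outline, and the outline is inaccurate as written. In the chart $(s,y,u)$ with $u=xB$ one has $P=u/s$. Hence $Q_d=y+3u+u^{d}s^{-2}$, not $y+3u+p^{d}s^{d-2}$, and $R_d=2s-(y+u)s^{2}-\tfrac{d-2}{d}u^{d}$. This chart degenerates along $x=0$, since $\det\partial(s,y,u)/\partial(x,y,z)=s_xu_z=x$. The complementary factor is $-2(1-ys)/s=-2/x$, not $-2$ times a unit. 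The computation can be pushed through on $\{Ax\neq0\}$, but the better chart is $(P,y,s)$, which is valid on all of $\{A\neq0\}$. Since $s$ is independent of $z$, $P_z=A^{3}$ and $s_x=A^{-2}$, so $\det\partial(P,y,s)/\partial(x,y,z)=-P_zs_x=-A$.

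With that chart, your own identities finish the job. Your first paragraph shows $R_d=f_{P,Q_d}(s)$ on $\{A\neq0\}$, where $f_{p,q}:=\Omega_{d;p,q,r}+r$. So there $G_d=\Lambda\circ\Psi$, with $\Psi=(P,Q_d,s)$ and $\Lambda(p,q,s)=(p,q,f_{p,q}(s))$. Because $Q_d-y=3Ps+P^{d}s^{d-2}$ depends only on $(P,s)$, $\det\operatorname{Jac}\Psi=-A$. Also $\det\operatorname{Jac}\Lambda=f_{p,q}'(s)$, which at $\Psi(x,y,z)$ equals $2(1-sy)=2/A$ by your key computation and $A(1-sy)=1$. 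The product is $-2$ on the dense set $\{A\neq0\}$, hence everywhere, since the determinant is a polynomial. This is in substance the paper's argument.

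Everything else is correct. Your fibre analysis is essentially the paper's:
\begin{itemize}
\item The cancellation of the degree-$d$ terms reduces necessity to $R=2s-ys^{2}-Ps^{3}$, i.e.\ $A^{2}R=2xA-x^{2}y-x^{3}B$.
\item Your inverse agrees with the paper's $x=s/D$, $z=pD^{3}-y^{2}(4-sy)D$, because $y^{2}(4+3xy)=y^{2}(4-sy)/D$.
\item Recovering $R_d=r$ by reading the identity backwards is a tidy way to do a step the paper leaves implicit.
\end{itemize}

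For $\mu(G_d)$ you take a different route. You count the points of a general fibre, whereas the paper proves $\mathbb{C}(x,y,z)=\mathbb{C}(p,q,r)(s)$ with $\Omega_d$ irreducible by Gauss's lemma. Your route works, given that $\mu$ is the cardinality of a general fibre, and it avoids the irreducibility argument. You must, however, justify that the discriminant in $s$ is not identically zero. This is easy: for fixed $p\neq0$ and $q$, $f_{p,q}-r$ has a multiple root only when $r$ is one of the finitely many critical values of $f_{p,q}$.
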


\begin{proof}
On $\{A\neq0\}$ put $s=x/A$. Since $A$ is independent of $z$, $P_{z}=A^{3}$, and $s_{x}=A^{-2}$; expanding along the $y$-row,
\[
\det\frac{\partial(P,y,s)}{\partial(x,y,z)}=-P_{z}s_{x}=-A .
\]
From $A^{2}R=2xA-x^{2}y-x^{3}B$ one gets $R=2s-ys^{2}-Ps^{3}$ on $\{A\neq0\}$, and since $A^{2}x^{d-2}B^{d}=P^{d}s^{d-2}$ and $x^{d}B^{d}=P^{d}s^{d}$, the added terms give $Q_{d}=y+\varphi(P,s)$, $R_{d}=2s-ys^{2}-\theta(P,s)$ with
\[
\varphi(P,s)=3Ps+P^{d}s^{d-2},\qquad \theta(P,s)=Ps^{3}+\tfrac{d-2}{d}P^{d}s^{d}.
\]
A direct check gives $\theta_{s}=s^{2}\varphi_{s}$, whence in the coordinates $(P,y,s)$,
\[
\det\frac{\partial(P,Q_{d},R_{d})}{\partial(P,y,s)}=2-2ys-\theta_{s}+s^{2}\varphi_{s}=2(1-ys)=\frac{2}{A}
\]
(the last equality since $A(1-ys)=A-xy=1$). The product of the two determinants is $-2$ on the dense set $\{A\neq0\}$; as $\det\operatorname{Jac}G_{d}$ is a polynomial, $\det\operatorname{Jac}G_{d}\equiv-2$. In particular the Jacobian is a nonzero constant, so $G_{d}$ is dominant.

With $p=P$ and $y=q-\varphi(p,s)=q-3ps-p^{d}s^{d-2}$, substituting into $R_{d}=r$ gives $\Omega_{d;p,q,r}(s)=0$. Conversely, for a root with $D:=1-sy\neq0$, set $x=s/D$ and $z=pD^{3}-y^{2}(4-sy)D$; then $A=1+xy=1/D$ and $B=pD$, whence $P=AB=p$, $3xB=3ps$ and $A^{2}x^{d-2}B^{d}=p^{d}s^{d-2}$, so $Q_{d}=y+3ps+p^{d}s^{d-2}=q$, and $R_{d}=r$; thus $G_{d}(x,y,z)=(p,q,r)$ with $x/A=s$. As $\Omega_{d}'(s)=2D$, a root is simple exactly when $D\neq0$, and (all these formulas having real coefficients) $(x,y,z)\mapsto x/A$ is a bijection between $G_{d}^{-1}(p,q,r)\cap\{A\neq0\}$ and the simple roots of $\Omega_{d;p,q,r}$, real preimages corresponding to real roots. In particular $x,y,z$ are rational functions of $(p,q,r,s)$, while $p,q,r$ are polynomials in $x,y,z$ and $s=x/A$, so $\mathbb C(x,y,z)=\mathbb C(p,q,r)(s)$, and $\mu(G_{d})$ is the degree over $\mathbb C(p,q,r)$ of the minimal polynomial of $s$. That polynomial is $\Omega_{d}=f_{p,q}(s)-r$, $f_{p,q}(S)=\tfrac2d p^{d}S^{d}+2pS^{3}-qS^{2}+2S$: it is primitive in $s$ over $\mathbb C[p,q,r]$ (the coefficient of $s$ is $2$) and, being of degree one in $r$ with unit leading coefficient, irreducible in $\mathbb C[p,q,r,s]$, so by Gauss's lemma irreducible over $\mathbb C(p,q,r)$. Hence $\mu(G_{d})=\deg_{s}\Omega_{d}=d$. Finally $A=0$ forces $P=AB=0$, so over $p\neq0$ every real preimage has $A\neq0$ and gives a real root.
\end{proof}

\begin{proof}[Proof of Theorem~\ref{main}]
\emph{Necessity.} Let $H\colon\mathbb R^{n}\to\mathbb R^{n}$ be a real Keller map with non-dense image and $d=\mu(H)$. There is a nonempty Zariski-open $U\subseteq\mathbb C^{n}$, defined over $\mathbb R$, over which $H_{\mathbb C}$ has exactly $d$ distinct preimages (the complement of the branch and non-properness loci, which are conjugation-invariant since $H$ is real); so, $\mathbb C^{n}\setminus U$ being contained in the zero set of a non-zero real polynomial, its real points $U(\mathbb R)$ are Euclidean-dense in $\mathbb R^{n}$. As $H(\mathbb R^{n})$ is not dense, some nonempty Euclidean-open $V$ is disjoint from it; pick $w\in V\cap U(\mathbb R)$. Complex conjugation acts on the $d$-point fibre $H_{\mathbb C}^{-1}(w)$ with no fixed point, since a fixed point would be a real preimage and $H^{-1}(w)=\varnothing$; the points therefore fall into conjugate pairs and $d$ is even. (This is the parity principle used by Campbell \cite{Ca13}, in the form that odd-degree extension implies dense image \cite[\S3.4]{Ca12}.) Moreover $d\neq2$ by \cite[Cor., p.~248]{Ca}. Hence $d\geq4$ in every dimension, and $S_{n}\subseteq\{2m:m\geq2\}$ for all $n$.

\emph{Attainment.} Fix even $d\geq4$ and take $p=1$, $q=r=-M$ in Proposition~\ref{prop}:
\[
\Omega_{d;\,1,-M,-M}(s)=\tfrac{2}{d}s^{d}+2s^{3}+2s+M(s^{2}+1).
\]
As $d\geq4$ is even, $h_{d}(s):=\bigl(\tfrac2d s^{d}+2s^{3}+2s\bigr)/(s^{2}+1)\to+\infty$ as $|s|\to\infty$, so $h_{d}$ attains a finite minimum $m_{d}$; for $M>-m_{d}$ we get $\Omega_{d;1,-M,-M}=(s^{2}+1)(M+h_{d})>0$ on $\mathbb R$, so this fibre is empty. (For $d=4$, $M=\tfrac52$ gives $\Omega_{4;1,-5/2,-5/2}=\tfrac12\bigl((s^{2}+2s)^{2}+(s+2)^{2}+1\bigr)$.) Emptiness is open in the target: the continuous function $\Omega_{d,w_{0}}(s)/(1+s^{2})^{d/2}$, $w_{0}=(1,-M,-M)$, is positive with limit $\tfrac2d>0$ at infinity, hence $\geq c$ for some $c>0$; and, using $|s|^{k}\leq(1+s^{2})^{d/2}$ for $0\leq k\leq d$,
\[
\bigl|\Omega_{d,w}(s)-\Omega_{d,w_{0}}(s)\bigr|\leq\varepsilon(w)(1+s^{2})^{d/2},\qquad
\varepsilon(w)=\tfrac2d|p^{d}-1|+2|p-1|+|q+M|+|r+M|\xrightarrow[w\to w_{0}]{}0 .
\]
On a ball about $w_{0}$ we have $\varepsilon(w)<c$ and $p\neq0$, so $\Omega_{d,w}>0$ on $\mathbb R$ and $G_{d}^{-1}(w)=\varnothing$; thus $G_{d}(\mathbb R^{3})$ omits a neighbourhood of $w_{0}$, and $d\in S_{3}$. The stabilization $(x,y,z,u)\mapsto(G_{d}(x,y,z),u)$ is Keller of generic degree $d$ with image $G_{d}(\mathbb R^{3})\times\mathbb R^{n-3}$, so $d\in S_{n}$ for all $n\geq3$; hence $S_{n}=\{2m:m\geq2\}$ there. Finally $\det L=-\tfrac12$, so $L\circ G_{d}$ has Jacobian determinant $1$.

\emph{Small dimensions.} For $n=1$ a Keller map has constant nonzero derivative, hence is affine and a bijection: $S_{1}=\varnothing$. For $n=2$, if $H$ is a real Keller map with non-dense image and $\mathrm{JC}_{2}$ held, then $H_{\mathbb C}$ would be a polynomial automorphism of $\mathbb C^{2}$; being defined over $\mathbb R$, its inverse also has real coefficients, so $H$ would be a bijection of $\mathbb R^{2}$, contradicting non-density. Hence $\mathrm{JC}_{2}\Rightarrow S_{2}=\varnothing$, while $S_{2}\subseteq\{2m:m\geq2\}$ is the dimension-free necessity above. Unconditionally, the generic degree of a Keller map of
$\mathbb{C}^{2}$ cannot equal two or three \cite{Or86}, nor four
\cite{Do00}, with the case of one dicritical component treated in
\cite{DO98}. Together with the parity argument, this gives
\[
S_{2}\subseteq\{2m:m\geq3\}.
\]
\end{proof}

\section{The image of $G_d$}

Throughout, $d\geq4$ is even, $w=(p,q,r)$, and $f_{p,q}(s)=\tfrac2d\,p^{d}s^{d}+2ps^{3}-qs^{2}+2s$
is the polynomial from the proof of Proposition~\ref{prop}, so that $\Omega_{d;w}=f_{p,q}-r$; let
$N_{d}(w)=\#\,G_{d}^{-1}(w)$.

\begin{lemma}\label{lem:desc}
	For $p\neq0$ the polynomial $\Omega_{d;w}'(s)=2p^{d}s^{d-1}+6ps^{2}-2qs+2$ has at most three real
	zeros counted with multiplicity.
\end{lemma}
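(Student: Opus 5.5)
We show that $\Omega''_{d;w}$ has at most two real zeros counted with multiplicity, and then pass back to $\Omega'_{d;w}$ by Rolle's theorem. Write $g(s)=\Omega_{d;w}'(s)=2p^{d}s^{d-1}+6ps^{2}-2qs+2$. Differentiating twice more kills the linear and constant terms:
\[
g''(s)=2(d-1)(d-2)p^{d}s^{d-3}+12p .
\]
Since $d\geq4$ is even, $d-3$ is odd and $p^{d}>0$ for $p\neq0$. Hence $g''$ is a nonconstant polynomial of the form $a s^{d-3}+b$ with $a>0$ and odd exponent. Such a polynomial is strictly monotone on $\mathbb R$ when $d-3\geq3$, and affine with nonzero slope when $d=4$. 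It therefore has exactly one real zero.

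We must count that zero with multiplicity. If $b=12p\neq0$, the zero $s_{0}\neq0$ is simple, because $(g'')'(s_{0})=(d-3)a s_{0}^{d-4}\neq0$. (The degenerate case $b=0$ is excluded, since $p\neq0$.) So $g''$ has exactly one real zero, counted with multiplicity.

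Next we apply the multiplicity form of Rolle's theorem: if a real polynomial has $k$ real zeros counted with multiplicity, its derivative has at least $k-1$. Indeed, a zero of multiplicity $m$ is a zero of the derivative of multiplicity $m-1$, and between consecutive distinct zeros there is a further zero of the derivative. Applying this once gives that $g'$ has at most $1+1=2$ real zeros with multiplicity. Applying it again gives that $g$ has at most three. This is exactly the claim.

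The only step needing care is the multiplicity bookkeeping at the bottom level. One must check that $g''$ cannot have a multiple zero, and this is where $p\neq0$ is used, through $b=12p\neq0$. If $p=0$, then $g$ is linear, so the bound would hold anyway, but the argument above would not apply directly. The parity of $d$ enters only through $p^{d}>0$ and the oddness of $d-3$; the latter is what guarantees that $g''$ vanishes exactly once. Otherwise the argument is routine.
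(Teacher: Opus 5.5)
Your proof is correct, but it takes a different route from the paper's.

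The paper applies Descartes' rule of signs to $\Omega'_{d;w}(s)$ and $\Omega'_{d;w}(-s)$. Their coefficient sequences are $(2p^{d},6p,-2q,2)$ and $(-2p^{d},6p,2q,2)$. It checks, in each of the six sign cases for $(p,q)$, that the total number of sign changes is at most three, and it uses the nonzero constant term to rule out $s=0$ as a zero.

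You instead differentiate twice to reach the binomial $2(d-1)(d-2)p^{d}s^{d-3}+12p$. Its exponent is odd, so it has exactly one real zero, and that zero is simple because $12p\neq0$. You then climb back up with the multiplicity form of Rolle's theorem, which is the same bookkeeping the paper itself uses in the proof of Lemma~\ref{lem:M}.

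What your route buys:
\begin{itemize}
\item It needs no sign case analysis.
\item It never uses the linear or constant coefficients, so the bound holds uniformly for every polynomial $2p^{d}s^{d-1}+6ps^{2}+\alpha s+\beta$ with $p\neq0$.
\item It makes transparent where the evenness of $d$ enters, namely through the oddness of $d-3$.
\end{itemize}

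What the paper's route buys: it reads the bound directly off the four-monomial sign pattern. This matches the paper's later comment that the bound four for $G_{d}$ is a sparsity phenomenon.

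Descartes' rule is itself proved by this kind of Rolle induction, so the two arguments are closely related.
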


\begin{proof}
	As $d$ is even and $p\neq0$ we have $2p^{d}>0$, and the constant term $2$ is non-zero, so $0$ is
	not a zero. Since $d\geq4$ the exponents $d-1\geq3$, $2$, $1$, $0$ are pairwise distinct, so the
	coefficient sequence is $(2p^{d},6p,-2q,2)$: at most four monomials, independently of $d$.
	Descartes' rule of signs, which bounds the number of positive zeros counted with multiplicity by
	the number of sign changes, applied to $\Omega'(s)$ and to
	$\Omega'(-s)=-2p^{d}s^{d-1}+6ps^{2}+2qs+2$ ($d-1$ being odd), gives a total of at most three in
	each of the six cases according to the signs of $p$ and $q$.
\end{proof}

\begin{lemma}\label{lem:M}
	For $p\neq0$ the number $M$ of real zeros of $\Omega_{d;w}$, counted with multiplicity, lies in
	$\{0,2,4\}$; in particular $N_{d}(w)\leq4$.
\end{lemma}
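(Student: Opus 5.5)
The plan is to combine Rolle's theorem with Lemma~\ref{lem:desc} and a parity count. Fix $w$ with $p\neq0$ and write $\Omega=\Omega_{d;w}$, a real polynomial of even degree $d$ with positive leading coefficient $\tfrac2d p^{d}$.

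\emph{Upper bound $M\leq4$.} Let the distinct real zeros of $\Omega$ be $s_1<\dots<s_k$, with multiplicities $m_1,\dots,m_k$, so that $M=\sum m_i$. By Rolle's theorem, $\Omega'$ has a zero strictly between $s_i$ and $s_{i+1}$ for each $i$, which gives $k-1$ zeros. In addition, $\Omega'$ vanishes at each $s_i$ with multiplicity $m_i-1$. These zeros are all distinct points, so $\Omega'$ has at least $(k-1)+\sum(m_i-1)=M-1$ real zeros counted with multiplicity. Lemma~\ref{lem:desc} bounds this number by $3$, hence $M\leq4$.

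\emph{Parity.} The non-real zeros of the real polynomial $\Omega$ occur in conjugate pairs with equal multiplicities. Since $d$ is even, $M\equiv d\equiv0\pmod 2$. Combined with the upper bound, this gives $M\in\{0,2,4\}$.

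\emph{Consequence for $N_d(w)$.} By Proposition~\ref{prop}, for $p\neq0$ the real preimages correspond bijectively to the real \emph{simple} roots of $\Omega$. Their number is at most the number of distinct real roots, which is at most $M\leq4$. Hence $N_d(w)\leq4$.

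No step here is a real obstacle; the only point needing care is the Rolle count with multiplicities, which is standard. The multiplicity-counting form of Lemma~\ref{lem:desc} is exactly what makes that count work.
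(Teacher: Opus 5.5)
Your proof is correct and follows essentially the same route as the paper's: the Rolle count with multiplicities giving at least $M-1$ real zeros of $\Omega'$, the bound of three from Lemma~\ref{lem:desc}, the parity $M\equiv d\pmod 2$, and the identification via Proposition~\ref{prop} of $N_d(w)$ with the number of simple real zeros. Your conjugate-pair justification of the parity simply makes explicit a step the paper states without comment.
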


\begin{proof}
	Let $t_{1}<\dots<t_{\ell}$ be the distinct real zeros with multiplicities $m_{1},\dots,m_{\ell}$
	and $M=\sum m_{i}$. Then $\Omega'$ vanishes at $t_{i}$ to order $m_{i}-1$ and, by Rolle, at least
	once in each of the $\ell-1$ intervals $(t_{i},t_{i+1})$, these points being distinct from the
	$t_{i}$; so $\Omega'$ has at least $\sum(m_{i}-1)+(\ell-1)=M-1$ real zeros with multiplicity,
	whence $M-1\leq3$ by Lemma~\ref{lem:desc}. Finally $M\equiv\deg_{s}\Omega_{d;w}=d\equiv0\pmod2$.
	By Proposition~\ref{prop}, $N_{d}(w)$ is the number of \emph{simple} real zeros, at most
	$M\leq4$.
\end{proof}

\begin{proposition}\label{prop:pzero}
	Over $p=0$ the preimages with $A=0$ form a single point when $q\neq0$, namely $x=2/q$, $y=-q/2$
	with $z$ determined by $r$, and none when $q=0$; the preimages with $A\neq0$ are the simple real
	zeros of $-qs^{2}+2s-r$. Hence $N_{d}(0,q,r)\leq3$, and the real fibre over $\{p=0\}$ is never
	empty.
\end{proposition}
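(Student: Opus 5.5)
Split the fibre over $w=(0,q,r)$ according to whether $A=0$ or $A\neq0$, and treat the two parts separately.

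\emph{Preimages with $A=0$.} On $\{A=0\}$ we have $xy=-1$, so $x\neq0$ and $y=-1/x$. Then $P=AB=0$ automatically, $B=y^{2}(4+3xy)=y^{2}$, and the added terms in $Q_{d}$ and $R_{d}$ carry a factor $A^{2}$ or $B^{d}$. Hence $Q_{d}=y+3xB=y+3xy^{2}=y-3y=-2y$. The term $x^{d}B^{d}=x^{d}y^{2d}=y^{d}$ does not vanish, so we keep it:
\[
R_{d}=2x-3x^{2}y-x^{3}z-\tfrac{d-2}{d}y^{d}=2x+3x-x^{3}z-\tfrac{d-2}{d}y^{d}=5x-x^{3}z-\tfrac{d-2}{d}y^{d}.
\]
The equation $Q_{d}=q$ forces $y=-q/2$. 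This requires $q\neq0$, because $y=-1/x\neq0$. Then $x=2/q$, and since $x^{3}\neq0$, the equation $R_{d}=r$ is linear in $z$ with nonzero coefficient and determines $z$ uniquely. So there is exactly one such preimage when $q\neq0$ and none when $q=0$.

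\emph{Preimages with $A\neq0$.} Proposition~\ref{prop} already shows that such preimages correspond bijectively to the simple real roots of $\Omega_{d;0,q,r}$. Setting $p=0$ kills the $p^{d}s^{d}$ and $ps^{3}$ terms, leaving $\Omega=-qs^{2}+2s-r$. The bijection argument uses only $D=1-sy$ with $y=q-\varphi(0,s)=q$, so it goes through unchanged at $p=0$. Since this polynomial has degree at most two and is not identically zero (its $s$-coefficient is $2$), it has at most two simple real roots. Hence $N_{d}(0,q,r)\leq1+2=3$.

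\emph{Nonemptiness.} If $q\neq0$, the $A=0$ preimage already lies in the fibre. If $q=0$, then $\Omega=2s-r$ has the simple real root $s=r/2$, giving a preimage with $A\neq0$. So the fibre is nonempty in every case.

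The only delicate step is the $A=0$ computation: one must track which added terms in $Q_{d}$ and $R_{d}$ survive (the $A^{2}$ factor kills the correction in $Q_{d}$, but the $x^{d}B^{d}$ term in $R_{d}$ does not vanish). That term only shifts the constant in the equation for $z$, so it does not affect uniqueness.
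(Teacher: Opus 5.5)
Your proof is correct and follows essentially the same route as the paper. You use the same split into $A=0$ and $A\neq0$, and the same computation $Q_d=-2y$ forcing $y=-q/2$, $x=2/q$, with $R_d$ affine and non-constant in $z$; your term $\tfrac{d-2}{d}y^{d}$ agrees with the paper's $\tfrac{d-2}{d}(q/2)^{d}$ since $d$ is even. You also use the same observation that the correspondence of Proposition~\ref{prop} on $\{A\neq0\}$ never uses $p\neq0$, followed by the same counting and non-emptiness argument.
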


\begin{proof}
	If $A=0$ then $xy=-1$, so $B=y^{2}$, $P=AB=0$ and, the correction term $A^{2}x^{d-2}B^{d}$
	vanishing, $Q_{d}=y+3xB=y+3xy^{2}=-2y$; thus $y=-q/2$ and $x=2/q$, which requires $q\neq0$. Then
	$xB=q/2$ and $2x-3x^{2}y=10/q$, so $R_{d}=10/q-8z/q^{3}-\tfrac{d-2}{d}(q/2)^{d}$, affine and
	non-constant in $z$. If $A\neq0$, the correspondence of Proposition~\ref{prop} applies without
	the hypothesis $p\neq0$, and at $p=0$ the fibre polynomial degenerates to $-qs^{2}+2s-r$.
	Counting: at most one preimage with $A=0$ and at most two with $A\neq0$; and if $q=0$ the linear
	polynomial $2s-r$ has a simple zero, while if $q\neq0$ the $A=0$ preimage exists.
\end{proof}

Since $\det\operatorname{Jac}G_{d}\equiv-2$, the map $G_{d}$ is a local diffeomorphism and
$G_{d}(\mathbb{R}^{3})$ is open; this is why non-density, rather than non-surjectivity, is the
right condition. For $p\neq0$ the polynomial $f_{p,q}$ has even degree $d$ with positive leading
coefficient $\tfrac2d p^{d}$, so it attains a global minimum; put
$\beta_{d}(p,q):=\min_{s\in\mathbb{R}}f_{p,q}(s)$.

\begin{lemma}\label{lem:beta}
	$\beta_{d}$ is continuous on $\{p\neq0\}$.
\end{lemma}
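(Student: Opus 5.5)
The plan is to prove continuity of $\beta_{d}(p,q)=\min_{s\in\mathbb R}f_{p,q}(s)$ at a fixed $(p_{0},q_{0})$ with $p_{0}\neq0$. Two ingredients suffice: the minimum can be taken over a compact $s$-interval that works uniformly for all $(p,q)$ near $(p_{0},q_{0})$, and $f_{p,q}(s)$ is jointly continuous in $(p,q,s)$. Once both are in place, the standard fact that the minimum over a fixed compact set of a jointly continuous function depends continuously on the parameters finishes the proof.

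\emph{Uniform coercivity.} Choose a closed neighbourhood $K$ of $(p_{0},q_{0})$ on which $|p|\geq|p_{0}|/2$ and $|q|\leq|q_{0}|+1$. Set $a=\tfrac2d(|p_{0}|/2)^{d}>0$, so $\tfrac2d p^{d}\geq a$ on $K$ because $d$ is even. Let $C$ bound $2|p|$, $|q|$ and $2$ on $K$. Then for $|s|\geq1$,
\[
f_{p,q}(s)\geq a s^{d}-3C|s|^{3}\geq s^{3}\bigl(a|s|^{d-3}-3C\bigr)\quad\text{in absolute-value form.}
\]
Since $d-3\geq1$, there is $R\geq1$ such that $f_{p,q}(s)>f_{p,q}(0)=0$ for all $|s|>R$ and all $(p,q)\in K$. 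Hence $\beta_{d}(p,q)\leq0$ on $K$, and
\[
\beta_{d}(p,q)=\min_{|s|\leq R}f_{p,q}(s)\quad\text{for }(p,q)\in K .
\]
The key point is the comparison with $f_{p,q}(0)=0$: it gives a threshold $R$ that does not depend on $(p,q)\in K$.

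\emph{Continuity on $K$.} The function $f$ is a polynomial in $(p,q,s)$, so it is uniformly continuous on the compact set $K\times[-R,R]$. Given $\varepsilon>0$, there is $\delta>0$ such that $|f_{p,q}(s)-f_{p',q'}(s)|<\varepsilon$ for every $|s|\leq R$ whenever $|(p,q)-(p',q')|<\delta$ in $K$. Minimizing over $s$ on both sides gives $|\beta_{d}(p,q)-\beta_{d}(p',q')|\leq\varepsilon$. This yields continuity at $(p_{0},q_{0})$, and since that point was arbitrary, continuity on $\{p\neq0\}$.

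The only delicate step is the uniform choice of $R$. It needs the leading coefficient $\tfrac2dp^{d}$ bounded away from $0$ near $p_{0}$, which is exactly where the hypothesis $p\neq0$ enters. Continuity genuinely fails at $p=0$: there $f_{0,q}(s)=-qs^{2}+2s$, which for $q\leq0$ is unbounded below, so $\beta_{d}\to-\infty$ as $p\to0$.
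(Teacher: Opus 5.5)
Your proof is correct and follows the paper's argument essentially step for step: on a compact set where $|p|$ is bounded below you get a radius $R$ uniform in $(p,q)$ by comparing with $f_{p,q}(0)=0$, and then $\beta_d$ is the minimum over $[-R,R]$ of a jointly continuous function. You spell out the uniform-continuity step that the paper leaves implicit, and your middle display should read $|s|^{3}(a|s|^{d-3}-3C)$.

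There is one slip in your closing aside, which the lemma does not need. The function $f_{0,q}(s)=-qs^{2}+2s$ is unbounded below for $q\geq0$, not for $q\leq0$; for $q<0$ it is bounded below. In that case the divergence $\beta_d\to-\infty$ as $p\to0$ comes instead from the term $2ps^{3}$ at scales $|q|/|p|\ll|s|\ll|p|^{-(d-1)/(d-3)}$.
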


\begin{proof}
	Let $K\subset\{p\neq0\}$ be compact, so $|p|\geq c>0$ there and the remaining coefficients are
	bounded. As $d\geq4$, every exponent occurring in $f_{p,q}$ other than $d$ is at most $d-1$, so
	$f_{p,q}(s)\geq\tfrac2d c^{d}|s|^{d}-C(K)|s|^{d-1}$ for $|s|\geq1$, the right side tending to
	$+\infty$ uniformly on $K$. Since $f_{p,q}(0)=0$ there is $R=R(K)$ with $f_{p,q}(s)>0$ for
	$|s|>R$ and all $(p,q)\in K$, so $\beta_{d}=\min_{|s|\leq R}f_{p,q}$ is the minimum of a jointly
	continuous function on a compact set.
\end{proof}

\begin{proposition}\label{prop:image}
	For $p\neq0$ the real fibre of $G_{d}$ over $(p,q,r)$ is non-empty if and only if
	$r>\beta_{d}(p,q)$. Consequently
	\[
	G_{d}(\mathbb{R}^{3})=\{p=0\}\cup\{(p,q,r):p\neq0,\ r>\beta_{d}(p,q)\},
	\]
	and $\Theta_{d}:=\mathbb{R}^{3}\setminus G_{d}(\mathbb{R}^{3})
	=\{(p,q,r):p\neq0,\ r\leq\beta_{d}(p,q)\}$ is unbounded, has non-empty interior, and contains a
	downward vertical ray over every $(p,q)$ with $p\neq0$; it is closed in $\mathbb{R}^{3}$, the
	image being open.
\end{proposition}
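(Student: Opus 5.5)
The plan is to reduce everything to the one-variable polynomial $\Omega_{d;w}=f_{p,q}-r$, using the fibre correspondence of Proposition~\ref{prop}. For $p\neq0$ no preimage has $A=0$, so the real fibre over $w$ is in bijection with the simple real zeros of $f_{p,q}(s)-r$.

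\emph{Step 1 ($r\leq\beta_d$ gives an empty fibre).} If $r<\beta_d(p,q)$, then $f_{p,q}-r>0$ on $\mathbb R$ and there are no real zeros. If $r=\beta_d(p,q)$, every real zero is a global minimum point of $f_{p,q}-r$, hence a zero of $\Omega'$. So every real zero is a multiple root, no real zero is simple, and the fibre is empty.

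\emph{Step 2 ($r>\beta_d$ gives a non-empty fibre).} Pick $s_0$ with $f_{p,q}(s_0)=\beta_d<r$. Since $f_{p,q}-r\to+\infty$ at $\pm\infty$, the intermediate value theorem gives a zero on each side of $s_0$. I need a \emph{simple} zero, and a zero where $f-r$ changes sign could still have odd multiplicity $\geq3$. To exclude this, use Lemma~\ref{lem:M}. Let $t_+$ be the largest real zero: $f-r$ is negative just to the left of $t_+$ (after the last sign change) and positive to the right, so $t_+$ has odd multiplicity, and likewise for the smallest zero $t_-<t_+$. If both multiplicities were at least $3$, the total multiplicity would be at least $6>4$, contradicting Lemma~\ref{lem:M}. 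Hence one of $t_\pm$ is simple, and that simple root yields a real preimage.

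A cleaner alternative for Step 2 is Lemma~\ref{lem:desc} directly. At a triple root $\Omega'$ has a double zero, and Rolle then forces further zeros of $\Omega'$ between distinct roots, which gives a count exceeding $3$. I would use the Lemma~\ref{lem:M} route, since it is already packaged.

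\emph{Step 3 (the image and the properties of $\Theta_d$).} Combining Steps 1--2 with Proposition~\ref{prop:pzero}, which says fibres over $p=0$ are never empty, gives the stated formula for $G_d(\mathbb R^3)$ and hence for $\Theta_d$. For $p\neq0$ the ray $\{(p,q,r):r\leq\beta_d(p,q)\}$ lies in $\Theta_d$; this gives the downward rays and unboundedness. For non-empty interior, the set $\{p\neq0,\ r<\beta_d(p,q)\}$ is open by Lemma~\ref{lem:beta} and non-empty (it contains the points below each ray). Closedness follows because the image is open, $G_d$ being a local diffeomorphism.

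The main obstacle is Step 2, excluding the possibility that all real zeros are multiple. Lemma~\ref{lem:M}'s bound $M\leq4$ is exactly what makes the odd-multiplicity argument work.
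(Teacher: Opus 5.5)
Your Steps 1 and 3 coincide with the paper's proof, and Step 2 uses the right ingredients: Lemma~\ref{lem:M} and the parity of multiplicities. But the specific claim Step 2 rests on is false. The largest real zero of $\Omega$ need not have odd multiplicity, because $\Omega$ can touch zero at $t_+$ after its last sign change.

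Concretely, take $d=4$ and $w=(-1,-\tfrac32,2)$. Then
\[
\Omega_{4;w}(s)=\tfrac12 s^{4}-2s^{3}+\tfrac32 s^{2}+2s-2=\tfrac12(s^{2}-1)(s-2)^{2}.
\]
Here $r=2>0=f_{p,q}(0)\geq\beta_4(p,q)$. Yet the largest zero $t_+=2$ is double, and $\Omega>0$ on both sides of it. So ``negative just to the left of $t_+$'' fails, and the deduction that both $t_\pm$ have odd multiplicity is invalid. Your conclusion that one of $t_\pm$ is simple happens to hold in this example ($t_-=-1$), but not for the reason you give.

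The repair is the paper's argument. Use the extreme \emph{sign changes} rather than the extreme zeros: set $t_+=\sup\{s:\Omega(s)<0\}$ and $t_-=\inf\{s:\Omega(s)<0\}$.
\begin{itemize}
\item These are distinct zeros, with $t_-<s_0<t_+$.
\item Since $\Omega$ has finitely many zeros, it is negative just inside $(t_-,t_+)$ at both ends and positive just outside. So both have odd multiplicity.
\item Two odd multiplicities $\geq3$ would give $M\geq6$, contradicting Lemma~\ref{lem:M}.
\end{itemize}
Alternatively, keep your $t_\pm$ but argue directly. If neither were simple, each would have multiplicity $\geq2$. Then $M\leq4$ forces them to be the only real zeros, both double. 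Hence $\Omega\geq0$ on $\mathbb{R}$, contradicting $\Omega(s_0)<0$.

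Your ``cleaner alternative'' via Lemma~\ref{lem:desc} is missing the same sign consideration. Two double roots give $\Omega'$ only $1+1+1=3$ real zeros, which Lemma~\ref{lem:desc} allows. Counting alone therefore does not exclude that configuration; you must also use that $\Omega$ takes a negative value.
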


\begin{proof}
	If $r<\beta_{d}(p,q)$ then $\Omega=f_{p,q}-r>0$ on $\mathbb{R}$ and $N_{d}=0$. If
	$r=\beta_{d}(p,q)$ then every real zero of $\Omega$ is a global minimiser of $f_{p,q}$ and
	$\Omega\geq0$ there, so its multiplicity is even; there are no simple real zeros and $N_{d}=0$.
	If $r>\beta_{d}(p,q)$, let $s_{m}$ realize the minimum; then $\Omega$ is positive at $\pm\infty$
	and negative at $s_{m}$, hence changes sign at least twice, hence has at least two distinct zeros
	of odd multiplicity; their multiplicities sum to at most $M\leq4$ by Lemma~\ref{lem:M}, and two
	odd multiplicities $\geq3$ would sum to at least six, so at least one zero is simple and
	$N_{d}\geq1$. The description of the image follows since the real fibre over $\{p=0\}$ is never
	empty (Proposition~\ref{prop:pzero}); the vertical rays and unboundedness of $\Theta_{d}$ are
	immediate from the displayed description, its interior is non-empty by Lemma~\ref{lem:beta}, and
	it is closed because the image is open.
\end{proof}

The description may look inconsistent with openness, $\Theta_{d}$ being contained in $\{p\neq0\}$
and yet closed, and containing a downward ray over every $p\neq0$. It is not: since $\{p=0\}$
lies in the image and the image is open, $\beta_{d}$ diverges as $p\to0$: given $C>0$ and
$q_{0}$, a ball about $(0,q_{0},-C)$ lies in the image, so $\beta_{d}(p,q)<-C$ for $p\neq0$ near
$0$ and $q$ near $q_{0}$; compactness of $[-Q,Q]$ makes this uniform for $|q|\leq Q$.

Both sets are semialgebraic by Tarski--Seidenberg and $\beta_{d}$ is a semialgebraic function, so
Proposition~\ref{prop:image} is an explicit quantifier elimination for this family rather than a
new kind of statement. For $d=4$ and $(p,q)=(1,-\tfrac52)$ one has
$f'_{1,-5/2}(s)=2s^{3}+6s^{2}+5s+2=(s+2)(2s^{2}+2s+1)$, the quadratic factor having negative
discriminant, so $s=-2$ is the only critical point and $\beta_{4}(1,-\tfrac52)=-2$ exactly: the
whole ray $r\leq-2$ over $(1,-\tfrac52)$ is omitted.

\begin{corollary}\label{cor:half}
	No $G_{d}$ omits a half-space.
\end{corollary}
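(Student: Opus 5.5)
We must show: for any closed half-space $H=\{\ell(w)\geq c\}$ or open one, with $\ell(p,q,r)=ap+bq+er$ and $(a,b,e)\neq0$, $H$ meets $G_{d}(\mathbb R^{3})$. By Proposition~\ref{prop:image} the image is $\{p=0\}\cup\{p\neq0,\ r>\beta_{d}(p,q)\}$. So it suffices to show that the image meets every open half-space $\{\ell>c\}$; a closed half-space contains an open one.

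\emph{Case $(b,e)\neq(0,0)$.} The plane $\{p=0\}$ lies entirely in the image by Proposition~\ref{prop:pzero}. On this plane $\ell(0,q,r)=bq+er$ is a non-constant linear function of $(q,r)$. It is therefore unbounded above, so some point of $\{p=0\}$ lies in $\{\ell>c\}$.

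\emph{Case $b=e=0$, $a\neq0$.} Now the half-space is $\{ap>c\}$, and we need an image point with $ap>c$. Take any $p_{0}$ with $ap_{0}>c$; if $p_{0}=0$ we are already done by Proposition~\ref{prop:pzero}. Otherwise pick any $q$ and any $r>\beta_{d}(p_{0},q)$, which is possible because $\beta_{d}$ is finite. By Proposition~\ref{prop:image}, $(p_{0},q,r)$ lies in the image. In fact every vertical line with $p\neq0$ meets the image in an upward ray, so the image projects onto all of the $p$-axis. Hence no half-space of the form $\{ap\geq c\}$ or $\{ap\le c\}$ is omitted.

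The only point needing care is the first case. A half-space omitted by the image would be contained in $\Theta_{d}\subseteq\{p\neq0\}$, and every half-space whose boundary is not parallel to $\{p=0\}$ meets $\{p=0\}$; the second case handles the parallel ones. Neither step requires any estimate on $\beta_{d}$ beyond its finiteness, so I expect no real obstacle. A compact way to state the whole argument is that the image contains the plane $\{p=0\}$ together with a point over every value of $p$. The complement of a half-space cannot contain such a set unless its boundary is a plane $\{p=\mathrm{const}\}$, and the second case excludes exactly that.
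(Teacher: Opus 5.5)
Your proof is correct and follows essentially the same route as the paper. The plane $\{p=0\}$ lies in the image, which forces any omitted half-space to have normal vector $(a,0,0)$; such a half-space $\{ap>c\}$ is then met by the upward vertical rays over $p\neq0$ from Proposition~\ref{prop:image}. The paper argues by contradiction (a linear functional bounded above on a plane through the origin must vanish on it) where you split into cases directly, but the ingredients are the same.
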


\begin{proof}
	Suppose $H=\{w:\langle v,w\rangle\geq c\}\subseteq\Theta_{d}$ with $v\neq0$. Since
	$\Theta_{d}\cap\{p=0\}=\varnothing$, the functional $\langle v,\cdot\rangle$ is $<c$ on the plane
	$\{p=0\}$; a linear functional bounded above on a $2$-plane through the origin vanishes on it, so
	$v=(v_{p},0,0)$ with $v_{p}\neq0$ and $H$ is $\{p\geq t\}$ or $\{p\leq t\}$. Such an $H$ contains
	points with $p\neq0$ fixed and $r$ arbitrarily large, which lie in the image by
	Proposition~\ref{prop:image}.
\end{proof}

The boundary found in Proposition~\ref{prop:image} is the graph of $\beta_{d}$ over $\{p\neq0\}$
(by the divergence just noted, it has no limit points over $\{p=0\}$): a surface. That its
dimension is $n-1$ is not an accident of this family:

\begin{remark}\label{rem:bdim}
	For any polynomial local diffeomorphism $H\colon\mathbb{R}^{n}\to\mathbb{R}^{n}$ with non-dense image, $\dim\partial H(\mathbb{R}^{n})=\dim S_{H}=n-1$, where $S_{H}$ is the set of points at which $H$ is not proper. Indeed $\partial H(\mathbb{R}^{n})$ is closed, semialgebraic and contained in $S_{H}$ \cite[\S1]{Je02}, and $\dim S_{H}\leq n-1$ \cite[Thm.~6.4]{Je02}; were its codimension at least two, $\mathbb{R}^{n}\setminus\partial H(\mathbb{R}^{n})$ would be connected \cite[Lem.~8.1]{Je02} and hence filled by the open and closed set $H(\mathbb{R}^{n})$, contradicting non-density.
\end{remark}

\section{Real fibre cardinality}

Let $\eta=(\eta_j)_{j\geq1}$ be a finitely supported sequence of real
numbers, and write $G_{\eta}=(P,Q_{\eta},R_{\eta})$ for the corresponding
member of the family of \cite{CE}, where
\[
Q_{\eta}
=Q+\sum_{j\geq1}\eta_j A^{2}x^{j}B^{j+2},
\qquad
R_{\eta}
=R-\sum_{j\geq1}\eta_j\frac{j}{j+2}x^{j+2}B^{j+2}.
\]
Thus $G_d$ is obtained by taking $\eta_{d-2}=1$ and all other
$\eta_j=0$. Its fibre polynomial is
\[
\Omega_{\eta;p,q,r}(s)
=\sum_{j\geq1}\eta_j\frac{2}{j+2}p^{j+2}s^{j+2}
+2ps^{3}-qs^{2}+2s-r.
\]
For $w\in\mathbb R^{3}$, write
\[
N_{\eta}(w)=\#\,G_{\eta}^{-1}(w).
\]

\begin{proposition}\label{prop:allreal}
	Let $d\geq4$ be even. Choose distinct reals $a_{1},\dots,a_{d}$ such that the coefficient of $s$
	in $g(s)=\prod_{i}(s-a_{i})$ is non-zero; replacing every $a_{i}$ by $-a_{i}$, which changes the
	sign of that coefficient because $d$ is even, we may assume it positive. Such a choice exists for
	every even $d\geq4$: for $a_{i}=-i$ one has $[s^{1}]g=d!\,H_{d}>0$, with $H_{d}$ the $d$-th
	harmonic number. Put $\lambda=2/[s^{1}]g>0$, $\Omega=\lambda g$, and set $p=1$,
	$q=-[s^{2}]\Omega$, $r=-[s^{0}]\Omega$, $\eta_{j}=\tfrac{j+2}{2}[s^{j+2}]\Omega$ for
	$2\leq j\leq d-2$, $\eta_{1}=\tfrac32\bigl([s^{3}]\Omega-2\bigr)$, and all other $\eta_{j}=0$.
	Then $G_{\eta}$ is a Keller map with $\det\operatorname{Jac}G_{\eta}\equiv-2$,
	$\eta_{d-2}=\lambda d/2>0$ and $\mu(G_{\eta})=d$, its image is non-dense, and the real fibre over
	$(1,q,r)$ has full cardinality, $N_{\eta}(1,q,r)=\mu(G_{\eta})=d$.
\end{proposition}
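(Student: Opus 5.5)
Our plan has three parts: redo the fibre correspondence of Proposition~\ref{prop} for a general $\eta$, match coefficients so that $\Omega_{\eta;1,q,r}=\Omega$, and then get non-density from the parity principle instead of exhibiting an empty fibre.

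\emph{Step 1: the general fibre correspondence.} We repeat the proof of Proposition~\ref{prop} with $\eta$ arbitrary. On $\{A\neq0\}$, with $s=x/A$, we have $A^{2}x^{j}B^{j+2}=P^{j+2}s^{j}$ and $x^{j+2}B^{j+2}=P^{j+2}s^{j+2}$. Hence $Q_\eta=y+\varphi(P,s)$ and $R_\eta=2s-ys^{2}-\theta(P,s)$, where
\[
\varphi=3Ps+\sum_j\eta_jP^{j+2}s^{j},\qquad \theta=Ps^{3}+\sum_j\eta_j\tfrac{j}{j+2}P^{j+2}s^{j+2}.
\]
The identity $\theta_s=s^{2}\varphi_s$ still holds term by term, since $\tfrac{j}{j+2}(j+2)s^{j+1}=s^{2}\cdot js^{j-1}$. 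So the determinant computation gives $\det\operatorname{Jac}G_\eta\equiv-2$ exactly as before. Substituting $y=q-\varphi$ into $R_\eta=r$ produces the displayed $\Omega_{\eta;p,q,r}$. The inverse formulas ($x=s/D$, $z=pD^{3}-y^{2}(4-sy)D$) and the relation $\Omega'=2D$ carry over unchanged, so for $p\neq0$ the real preimages correspond bijectively to the simple real roots.

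\emph{Step 2: the generic degree.} The argument of Proposition~\ref{prop} shows that $\Omega_{\eta;p,q,r}=f(s)-r$ is the minimal polynomial of $s$ over $\mathbb C(p,q,r)$. It is irreducible: it is linear in $r$ with unit coefficient, and it is primitive because the coefficient of $s$ is $2$. Its $s$-degree is $\max\{j+2:\eta_j\neq0\}$, which equals $d$ once we know $\eta_{d-2}\neq0$ and $\eta_j=0$ for $j>d-2$. Hence $\mu(G_\eta)=d$.

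\emph{Step 3: coefficient matching.} With $p=1$ the fibre polynomial is
\[
\sum_j\tfrac{2}{j+2}\eta_js^{j+2}+2s^{3}-qs^{2}+2s-r.
\]
We compare this with $\Omega=\lambda g$ degree by degree.
\begin{itemize}
\item In degree $1$ both sides equal $2$, by the choice $\lambda=2/[s^1]g$; this is why we normalised $[s^1]g>0$, although only non-vanishing is needed.
\item In degrees $0$ and $2$ the coefficients match by the definitions of $r$ and $q$.
\item In degree $3$ we need $\tfrac23\eta_1+2=[s^3]\Omega$, which is the stated formula for $\eta_1$.
\item In degrees $4,\dots,d$ we need $\tfrac{2}{j+2}\eta_j=[s^{j+2}]\Omega$, which is the stated formula for $\eta_j$.
\end{itemize}
The top coefficient gives $\eta_{d-2}=\tfrac d2\lambda$, since $g$ is monic. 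Thus $\Omega_{\eta;1,q,r}=\lambda\prod_i(s-a_i)$ has $d$ distinct real simple roots, and by Step 1, with $p=1\neq0$, we get $N_\eta(1,q,r)=d$. For the existence claim we take $a_i=-i$. Then $g=\prod_i(s+i)$ and $[s^1]g=\sum_k\prod_{i\neq k}i=d!\,H_d$.

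\emph{Step 4: non-density.} This is the step we expect to be the main obstacle, since we cannot reuse the explicit empty fibre constructed for $G_d$. The cleanest route is parity. Suppose the image were dense. Then, as in the Necessity part of the proof of Theorem~\ref{main}, real points $w$ of the good Zariski-open set $U$ with $p\neq0$ would have real preimages. That alone gives no contradiction, so instead we look directly for an empty fibre. Fix $p=1$ and $q$, and take $r\to-\infty$. Then $\Omega_{\eta;1,q,r}=f(s)-r$, and $f$ has even degree $d$ with leading coefficient $\tfrac{2}{d}\eta_{d-2}=\lambda>0$, so $f$ is bounded below. Hence $\Omega_{\eta;1,q,r}>0$ on $\mathbb R$ once $r<\min f$, and that fibre is empty. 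Openness of emptiness then follows verbatim from the estimate in the Attainment part of the proof of Theorem~\ref{main}: divide by $(1+s^2)^{d/2}$ and use the positive limit $\lambda$ at infinity. So the image omits an open set and is non-dense.
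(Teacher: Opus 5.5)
Your proof is correct and follows the paper's argument: the term-by-term identity $\theta_s=s^2\varphi_s$ for the Jacobian and the fibre correspondence, Gauss's lemma for $\mu(G_\eta)=d$, coefficient matching to get $\Omega_{\eta;1,q,r}=\lambda g$, and an empty fibre from the positive leading coefficient. The only difference is that you get openness of the omitted set from the perturbation estimate of the Attainment step (adapted to include the $\eta_j$ terms), whereas the paper uses continuity of $\beta_\eta$ as in Lemma~\ref{lem:beta}. You should delete the parity detour in Step 4, and the plan's promise to use parity, because parity only shows that a non-dense image forces even degree and cannot establish non-density; your final argument rightly does not rely on it.
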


\begin{proof}
	The argument of Proposition~\ref{prop} extends to $G_{\eta}$. Indeed, on
	$\{A\neq0\}$ one has
	$Q_{\eta}=y+\varphi(P,s)$ and $R_{\eta}=2s-ys^{2}-\theta(P,s)$ with
	$\varphi(P,s)=3Ps+\sum_{j}\eta_{j}P^{j+2}s^{j}$ and
	$\theta(P,s)=Ps^{3}+\sum_{j}\eta_{j}\tfrac{j}{j+2}P^{j+2}s^{j+2}$, the added terms
	$A^{2}x^{j}B^{j+2}$ and $x^{j+2}B^{j+2}$ being polynomials; these satisfy
	$\theta_{s}=s^{2}\varphi_{s}$ term by term, so the determinant computation gives
	$\det\operatorname{Jac}G_{\eta}\equiv-2$, and the correspondence uses only
	$\partial_{s}\Omega_{\eta}=2(1-sy)=2D$, which holds for every $\eta$. That
	$\mu(G_{\eta})=\deg_{s}\Omega_{\eta}=d$ follows exactly as there: $\Omega_{\eta}$ has degree one
	in $r$ with unit leading coefficient, hence is irreducible in $\mathbb{C}[p,q,r,s]$, and it is
	primitive in $s$ because the coefficient of $s$ is $2$, so it is irreducible over
	$\mathbb{C}(p,q,r)$ by Gauss's lemma. This recovers, in the present notation, the generic-degree computation of
	\cite[Thm.~5.2]{CE}. By construction $\Omega_{\eta;1,q,r}=\lambda
	g$ has $d$ distinct simple real zeros, and each yields a preimage since
	$\partial_{s}\Omega=2D\neq0$ at a simple zero; as $p=1\neq0$ no preimage has $A=0$, so these
	exhaust the fibre. For non-density, the leading coefficient of $\Omega_{\eta;p,q,r}$ in $s$ is
	$\eta_{d-2}\tfrac2d\,p^{d}>0$ for every $p\neq0$ because $d$ is even, so
	$f_{\eta;p,q}:=\Omega_{\eta;p,q,r}+r$ (independent of $r$) is coercive; its minimum $\beta_{\eta}(p,q)$ is continuous
	on $\{p\neq0\}$ by the argument of Lemma~\ref{lem:beta}, and
	$\{(p,q,r):p\neq0,\ r<\beta_{\eta}(p,q)\}$ is a non-empty open set on which $\Omega_{\eta}>0$,
	hence disjoint from the image.
\end{proof}

\begin{remark}\label{rem:dich}
	The maximal cardinality of a real fibre is not determined by the generic
	degree. Indeed, for every even $d\geq6$, the maps $G_d$ have generic degree
	$d$ and all their real fibres have cardinality at most four, whereas the
	preceding construction gives a map of the same generic degree with a real
	fibre of cardinality $d$. Consequently, there is no uniform bound on real
	fibre cardinalities for Keller maps $\mathbb{R}^{3}\to\mathbb{R}^{3}$ with
	non-dense image.
\end{remark}

The bound four for $G_{d}$ is a sparsity phenomenon read off the definition: $\Omega_{d;w}'$ has
at most four monomials for every $d$. It is not a statement about real Keller maps, and it uses that $d$
is even.

\begin{remark}\label{rem:final}
	The realization is minimal in generic degree, not in component degree:
	the component degrees of $G_4$ are $(7,26,24)$, compared with
	$(12,11,4)$ for Gallagher's map. The omitted set is described exactly
	by Proposition~\ref{prop:image}. By Corollary~\ref{cor:half} no $G_{d}$ omits a half-space, whereas the image of each example of \cite{FJ} lies in one: $\mathbb{R}\times\mathbb{R}_{+}$ for the polynomial map of nonconstant Jacobian \cite[Thm.~5]{FJ}, and $\mathbb{R}^{2}\times\mathbb{R}_{+}$ for the regular map of Jacobian one \cite[Rem.~8]{FJ}. Nor do the obstructions of \cite{Je02} exclude a half-space, a hyperplane being both unbounded and $\mathbb{R}$-uniruled. Whether a polynomial Keller map can omit a half-space is left open. By contrast, the image can never be the exterior of a ball; this follows
	from \cite[Ex.~7.3(b)]{Je02}, which applies to arbitrary polynomial maps.
\end{remark}

\section*{Acknowledgments} 
The author thanks Z.~Jelonek for a helpful comment. The author is partially supported by the grant of Narodowe Centrum Nauki, number 2024/55/B/ST1/01412.

\end{document}